\documentclass[11pt]{article}

\usepackage{fullpage}
\usepackage{amsmath, amsthm, amsfonts, amssymb, amstext, mathrsfs, enumerate}
\usepackage{mathtools}
\usepackage{graphicx, ragged2e, lscape, framed, xcolor}
\usepackage{subfiles}
\usepackage{booktabs}

\usepackage{pgf, tikz, float, subcaption}
\usetikzlibrary{graphs, graphs.standard, decorations.pathreplacing, calligraphy}

\theoremstyle{plain}
\newtheorem{theorem}{Theorem}[section]
\newtheorem{lemma}[theorem]{Lemma}

\newtheorem{conjecture}[theorem]{Conjecture}
\newtheorem{corollary}[theorem]{Corollary}
\newtheorem{problem}[theorem]{Problem}

\numberwithin{equation}{section}
\allowdisplaybreaks

\newcommand{\affl}[3]{\noindent #1, Email: {\tt #2}\\ \textsc{#3}\\[1.5pt]}
\newcommand{\defeq}{\vcentcolon=}

\usepackage[pagebackref]{hyperref}
\hypersetup{
	colorlinks=true,
    urlcolor=purple,
	linkcolor=purple,
    citecolor=purple,
}

\DeclareMathOperator{\NEPS}{NEPS}

\DeclareMathOperator{\inertia}{In}

\DeclareMathOperator{\rank}{rank}

\def\1{{\bf 1}}

\title{\textbf{Resolution of a problem of Mohar on non-positive inertia}}
\author{Clive Elphick, Hitesh Kumar, Shivaramakrishna Pragada, Thom\'{a}s Jung Spier}
\date{}

\begin{document}
\maketitle
\begin{abstract}
For a graph $G$ of order $n$, its \emph{positive, negative} and \emph{non-positive inertia} is the number of positive, negative and non-positive eigenvalues of its adjacency matrix $A(G)$, respectively. Mohar asked whether every graph with $k$ non-positive eigenvalues has order $O(k^2)$ as $k\to \infty$. Using NEPS, we construct a sequence of non-singular graphs with negative inertia $k$ and order $\Omega(k^{\frac{7}{3}})$ as $k\to \infty$, thus resolving Mohar's problem. Our result also strongly refutes a recent conjecture of Akbari, Elphick, Kumar, Pragada, and Tang involving positive and negative inertia. 
\end{abstract}

\noindent
\textbf{Keywords:} Inertia, Adjacency, Tensor product, NEPS

\noindent
\textbf{MSC2020:} 05C50

\section{Introduction}

Let $G$ be a graph with $n$ vertices and adjacency matrix $A(G)$. We denote the eigenvalues of $A(G)$ by 
\[\lambda_1(G) \ge \lambda_2(G) \ge \ldots \ge \lambda_n(G).\] 
Let $n^+(G)$, $n^0(G)$, and $n^-(G)$ denote the number of positive, zero, and negative eigenvalues of $A(G)$, counting multiplicities, respectively. The \emph{inertia} of $G$, denoted $\inertia(G)$, is the ordered triple $(n^+(G), n^0(G), n^-(G))$. The \emph{rank} of $G$, denoted $\rank(G)$, is defined to be the rank of $A(G)$. 

Akbari, Elphick, Kumar, Pragada, and Tang \cite{Akbari_Elphick_Kumar_Pragada_Tang_2026} proposed the following conjecture concerning graph inertia, which relates to various other results and conjectures concerning order, rank, and inertia of graphs. In particular, this conjecture generalizes to all graphs the well-known \emph{absolute bound} for strongly regular graphs (see \cite{Seidel_1979} and \cite[Chapter 10]{Brouwer_Haemers_2012}). Conjecture \ref{conj:inertia_main} is also a step towards Torga\v{s}ev's problem of estimating $n^+$ in terms of $n^-$ \cite{Torgasev_number_of_positive_negative_1992} (also see \cite{Torgasev_1985, Torgasev_two_neg_1985, Torgasev_three_neg_1989}). We refer the reader to \cite{Akbari_Elphick_Kumar_Pragada_Tang_2026} for detailed motivation.

\begin{conjecture}[\cite{Akbari_Elphick_Kumar_Pragada_Tang_2026}]\label{conj:inertia_main} For any graph $G$, 
    \[ n^+(G) \le {n^-(G)  +1 \choose 2}.\]
\end{conjecture}

In \cite{Akbari_Elphick_Kumar_Pragada_Tang_2026}, the authors provided evidence for Conjecture \ref{conj:inertia_main} by proving it for various graph classes, including random, subquartic, planar, and line graphs. As discussed in \cite{Akbari_Elphick_Kumar_Pragada_Tang_2026}, the only known upper bound for $n^+$ is exponential in $n^-$, whereas the conjectured bound is quadratic in $n^-$. 

Recently, it came to our attention\footnote{We thank Varun Sivashankar and Quanyu Tang for bringing this to our attention.} that Conjecture \ref{conj:inertia_main} is false for a family of graphs previously discussed in a paper by Charles, Farber, Johnson, and Kennedy-Shaffer \cite{Charles_Farber_Johnson_Shaffer_2013}. 
This family of graphs was rediscovered in a recent paper by Chen and Li \cite{Chen_Li_2026}. 

Let $K(k,2)$ denote the Kneser graph whose vertex set is the set of all $2$-subsets of $\{1, \ldots, k\}$ and two vertices are adjacent if and only if they are disjoint. Let $K_k$ denote a complete graph on the vertex set $\{1, \ldots, k\}$. Let $W(k)$ denote the graph obtained from the disjoint union of $K_k$ and $K(k,2)$ by joining a vertex $j\in V(K_k)$ to a vertex $A\in V(K(k,2))$ whenever $j\in A$. It is clear that $W(k)$ has order $k + \binom{k}{2}$. 

\begin{theorem}[\cite{Charles_Farber_Johnson_Shaffer_2013, Chen_Li_2026}] For all $k\ge 5$, $W(k)$ has inertia $\left(\binom{k}{2} +1, 0 , k-1\right)$.     
\end{theorem}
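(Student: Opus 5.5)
We compute the inertia with the Haynsworth inertia additivity formula, taking the Schur complement of the Kneser block. Write the adjacency matrix of $W(k)$ as
\[
A(W(k))=\begin{pmatrix} J_k-I_k & N\\ N^{T} & B\end{pmatrix},
\]
where $N$ is the $k\times\binom{k}{2}$ vertex--pair incidence matrix and $B=A(K(k,2))$. If $B$ is non-singular, Haynsworth's formula gives $\inertia(W(k))=\inertia(B)+\inertia(S)$ with $S=J_k-I_k-NB^{-1}N^{T}$. The plan has two steps: find $\inertia(B)$, then show that $S$ is positive definite.

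\textbf{Inertia of $B$.} The eigenvalues of the Kneser graph $K(k,2)$ are well known:
\begin{itemize}
\item $\binom{k-2}{2}$ with multiplicity $1$, eigenvector $\1$;
\item $-(k-3)$ with multiplicity $k-1$;
\item $1$ with multiplicity $\binom{k}{2}-k$.
\end{itemize}
For $k\ge 5$ all three values are non-zero. Hence $B$ is invertible and $\inertia(B)=\left(\binom{k}{2}-k+1,\,0,\,k-1\right)$.

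The key structural fact is that the column space of $N^{T}$ is exactly the $1$-dimensional $\binom{k-2}{2}$-eigenspace plus the $(k-1)$-dimensional $-(k-3)$-eigenspace. To see this, use $N^{T}N=2I+A(T(k))$, where $T(k)$ is the line graph of $K_k$, together with $B=J-I-A(T(k))$. These give $B=J+I-N^{T}N$, which makes the eigenspace decomposition explicit. This identification of the image of $N^{T}$ inside the eigenspaces of $B$ is the step needing the most care; everything else is bookkeeping.

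\textbf{$S$ is positive definite.} We compute $NB^{-1}N^{T}$ spectrally on $\mathbb{R}^k$, using $NN^{T}=(k-2)I+J$.
\begin{itemize}
\item On $\1$: $N^{T}\1=2\cdot\1$ is a $\binom{k-2}{2}$-eigenvector of $B$, and $NN^{T}\1=(2k-2)\1$. So $NB^{-1}N^{T}\1=\frac{2k-2}{\binom{k-2}{2}}\1$, and
\[
S\1=(k-1)\left(1-\frac{4}{(k-2)(k-3)}\right)\1 ,
\]
whose coefficient is positive for $k\ge5$ because $(k-2)(k-3)\ge 6>4$.
\item On $x\perp\1$: $N^{T}x$ lies in the $-(k-3)$-eigenspace of $B$, and $NN^{T}x=(k-2)x$. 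So $NB^{-1}N^{T}x=-\frac{k-2}{k-3}x$, and
\[
Sx=\left(-1+\frac{k-2}{k-3}\right)x=\frac{1}{k-3}\,x ,
\]
which is also positive.
\end{itemize}
Thus $\inertia(S)=(k,0,0)$.

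Adding the two inertias gives
\[
\inertia(W(k))=\left(\binom{k}{2}-k+1+k,\;0,\;k-1\right)=\left(\binom{k}{2}+1,\;0,\;k-1\right),
\]
as claimed.
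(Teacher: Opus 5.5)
The paper gives no proof of this theorem. It quotes it from \cite{Charles_Farber_Johnson_Shaffer_2013, Chen_Li_2026} and uses it only as motivation, so there is no in-paper argument to compare your route against.

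Your Schur-complement argument is correct and self-contained. Since $B=A(K(k,2))$ is invertible, Haynsworth additivity gives $\inertia(W(k))=\inertia(B)+\inertia(S)$. Your values all check out:
\begin{itemize}
\item $\inertia(B)=(\binom{k}{2}-k+1,0,k-1)$;
\item $S\1=(k-1)\bigl(1-\frac{4}{(k-2)(k-3)}\bigr)\1$;
\item $Sx=\frac{1}{k-3}x$ for $x\perp\1$.
\end{itemize}

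The step you flag as needing the most care but leave unwritten is one line. For $x\perp\1$ we have $\1^TN^Tx=(N\1)^Tx=(k-1)\1^Tx=0$, so $JN^Tx=0$. Hence
\[
BN^Tx=(J+I-N^TN)N^Tx=N^Tx-N^T\bigl((k-2)I+J\bigr)x=-(k-3)N^Tx.
\]
You only need that $N^T\1$ and $N^Tx$ are eigenvectors of $B$, not that they span the two eigenspaces exactly.

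The identity $B=J+I-N^TN$ also lets you avoid quoting the Kneser spectrum. Since $\1=\tfrac12N^T\1$ lies in $\mathrm{Im}\,N^T=(\ker N)^{\perp}$, $B$ acts as the identity on $\ker N$. That space has dimension $\binom{k}{2}-k$, because $NN^T=(k-2)I+J$ is non-singular. Together with the two computations above, this accounts for the whole spectrum of $B$.

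Your route also shows exactly where $k\ge 5$ enters. At $k=4$ the coefficient of $S$ on $\1$ is $3(1-2)=-3<0$, and the same bookkeeping gives $\inertia(W(4))=(6,0,4)$ instead.
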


It is clear that for the graphs $W(k)$, we have $n^+ = 1 + {n^- + 1 \choose 2}$, thus showing that Conjecture \ref{conj:inertia_main} is off by additive constant +1! This still leaves open the possibility that the following conjecture may hold. 

\begin{conjecture}\label{conj:negative_inertia_square} The positive inertia of a graph with $k$ negative eigenvalues is at most $O(k^2)$ as $k\to \infty$.
\end{conjecture}

The above conjecture, if true, would also answer affirmatively the following (unpublished) problem of Mohar \cite{Mohar_lecture} (cf. \cite{Mohammadian_2022}).  

\begin{problem}[Mohar \cite{Mohar_lecture}]\label{prob:Mohar} Is it true that the order of a graph with $k$ non-positive eigenvalues is at most $O(k^2)$ as $k\to \infty$. 
\end{problem}

Observe that Problem \ref{prob:Mohar} and Conjecture \ref{conj:negative_inertia_square} are the same problem for non-singular graphs. Even in the general case, it turns out that Problem \ref{prob:Mohar} and Conjecture \ref{conj:negative_inertia_square} are essentially equivalent problems (see Section \ref{section:negative_vs_non_positive}). Our main result in this paper is the following. 

\begin{theorem}\label{thm:inertia_counter} There exists a sequence of non-singular graphs $G_m$ $(m\in \mathbb{N})$ such that 
\[n^+(G_m)=\Theta(n^-(G_m)^{\frac{7}{3}})\]
as $m\to \infty$. 
\end{theorem}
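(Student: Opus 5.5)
The plan is to build $G_m$ as a NEPS of a few graphs whose spectra we know exactly. We take the graphs $W(k)$ from the theorem of Charles–Farber–Johnson–Kennedy-Shaffer and Chen–Li, possibly together with complete graphs or Kneser graphs $K(k,2)$, each with its own parameter. The relevant standard fact is this: if $G_1,\dots,G_r$ have eigenvalues $\lambda^{(i)}_{j}$ and $\mathcal{B}\subseteq\{0,1\}^r\setminus\{0\}$ is the basis, then the eigenvalues of $\NEPS(G_1,\dots,G_r;\mathcal B)$ are
\[\sum_{\beta\in\mathcal B}\prod_{i}\bigl(\lambda^{(i)}_{j_i}\bigr)^{\beta_i},\]
one for each choice of indices $(j_1,\dots,j_r)$. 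The inertia of the NEPS is therefore obtained by counting index tuples according to the sign of a polynomial in the factor eigenvalues. The first step is to record the full spectrum of the building blocks. For $K(k,2)$ it is $\binom{k-2}{2}$ once, $-(k-3)$ with multiplicity $k-1$, and $1$ with multiplicity $k(k-3)/2$. For $W(k)$ we will need, beyond the inertia, the approximate location of its eigenvalues, which we get from the equitable partition and the Kneser spectrum. The feature we want to exploit is a huge block of $\Theta(k^2)$ small positive eigenvalues, of size roughly $1$, against only $\Theta(k)$ negative eigenvalues, of size roughly $k$.

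The pure tensor product is not enough. With $n^\pm(G\times H)$ equal to $n^+n^+ + n^-n^-$ and $n^+n^-+n^-n^+$, it only gives exponent $4/3$ from $W(k)\times W(k)$. The key step is therefore to choose a basis mixing products and sums, for example the strong product $(\lambda+1)(\mu+1)-1$ or a three-factor combination like $\lambda\mu\nu+\lambda+\mu+\nu$. The goal is that a tuple gives a negative eigenvalue only when \emph{all} (or most) coordinates lie in the small negative part. The linear terms, which are dominated by the many positive eigenvalues near $1$, should then push every other tuple to be positive. Roughly, we aim for $n^-=\Theta(\prod_i b_i)$, where $b_i=n^-(G_i)$, while $n^+$ is essentially the full order $\prod_i |V(G_i)|$. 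With parameters $k_i$ in the factors chosen as powers of a common $m$ (unequal sizes, since a product of equal-size copies of $W(k)$ caps the exponent at $2$), we then optimize the exponent. The target is $n^-\asymp m^{3}$ against order $\asymp m^{7}$, which gives $7/3$.

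The remaining steps are routine once the construction is fixed. First, check non-singularity: no tuple yields eigenvalue exactly $0$. This may require perturbing the construction, for instance by adding a factor $K_2$ or shifting by choosing $K_t$ factors, since eigenvalue $1$ of Kneser factors can create exact cancellations. Second, do the asymptotic count, which gives $n^+=\Theta((n^-)^{7/3})$. The main obstacle is the sign analysis of the NEPS polynomial over all index tuples. The dangerous tuples are those mixing one large negative coordinate (size about $k$) with moderate positive ones, since a single $-k$ times the large Perron eigenvalue can flip the sign of many tuples and inflate $n^-$ to about $k\cdot(\text{order}/k^2)$. I would first try to choose the basis and the relative sizes of the parameters so that the Perron eigenvalues and the $\Theta(k)$-sized negative eigenvalues appear only in products where their signs pair up. If that fails, I would replace $W(k)$ by a graph with a more favourable spectrum, still with few and large negative eigenvalues, and redo the optimization.
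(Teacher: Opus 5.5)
\textbf{There is a genuine gap.} You never fix a basis or carry out the sign analysis, so the exponent $7/3$ appears only as a target. Worse, the mechanism you describe cannot reach it.

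Even your ideal outcome, $n^-\asymp\prod_i n^-(G_i)$ with $n^+$ essentially $\prod_i|V(G_i)|$, gives the exponent $\sum_i\log|V(G_i)|\big/\sum_i\log n^-(G_i)$. This is a weighted average of the ratios $\log|V(G_i)|/\log n^-(G_i)$. Those ratios tend to $2$, $2$ and $1$ for $W(k)$, $K(k,2)$ and $K_t$, and bounded-size factors contribute only $O(1)$. So unequal parameters cannot push past $2$, and your ``$m^3$ against $m^7$'' contradicts your own accounting.

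Moreover, that ideal is unattainable. Take factors $W(k_i)$ or $K(k_i,2)$ with $k_i\to\infty$ and a fixed basis $\mathcal B$. Choose $\lambda_j$ to be a negative eigenvalue of factor $j$ (of size about $k_j$), and $\lambda_i=1$ for $i\neq j$; eigenvalue $1$ has multiplicity $\Theta(k_i^2)$ in both graphs. The NEPS eigenvalue is $\lambda_j\,|\{\beta\in\mathcal B:\beta_j=1\}|+|\{\beta\in\mathcal B:\beta_j=0\}|$, which is negative once $k_j$ exceeds about $2^r$. A factor not occurring in $\mathcal B$ only produces disjoint copies, which lowers the exponent. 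There are $\Theta(N/k_j)$ such tuples, where $N$ is the order. Hence $n^-=\Omega(N/\min_j k_j)=\Omega(N^{1-1/(2r)})$, and the exponent is at most $2r/(2r-1)\le 4/3$ for $r\ge 2$, no better than $W(k)\times W(k)$. This is exactly your ``dangerous tuple'' problem, and it is structural. The bulk positive eigenvalues of these graphs are bounded while the few negative ones grow, as zero trace forces, and no fixed $0/1$ basis can compensate.

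\textbf{The missing idea is to invert the signs.} The paper takes $q=7$ copies of $K_m$. Its bulk eigenvalue $-1$ (multiplicity $m-1$) is negative and its single large eigenvalue $m-1$ is positive. The basis is designed so that tuples consisting mostly of $-1$'s are positive. Concretely, the basis is $\mathcal B_P=\{1_S: P(1_{[q]\setminus S})=1\}$ for a Boolean polynomial $P=\sum_R c_R\prod_{i\in R}x_i$. Lemma~\ref{lemma:pol_eigen} shows that on $\mathcal U_T$ (coordinates in $T$ at $-1$, the others at $m-1$, dimension $(m-1)^{|T|}$) the eigenvalue is $\sum_{R\supseteq T}c_R\,m^{q-|R|}$.

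Admissibility then does all the work:
\begin{itemize}
\item $P(1,\dots,1)=0$ keeps the zero vector out of the basis.
\item $c_{[q]}>0$ makes the all-bulk eigenvalue equal to $c_{[q]}>0$. It also gives every $\lambda_T$ a positive constant term, so $G_m$ is non-singular for large $m$ without any perturbation.
\item Negative coefficients occur only in degree at most $s$, so $\lambda_T<0$ forces $|T|\le s$. This gives $n^-=\Theta(m^s)$ against $m^q$ vertices.
\end{itemize}
What remains is exhibiting the explicit $(7,3)$-admissible polynomial of Theorem~\ref{thm:7_3_admissible}. Your plan has no counterpart to this sign inversion or to the polynomial encoding of the basis.
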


Thus, Conjecture \ref{conj:negative_inertia_square} is strongly false and Mohar's problem \ref{prob:Mohar} does not have a positive answer. In fact, by taking the disjoint union of $G_m$ and appropriately many $K_2$'s, we also have the following corollary. 

\begin{corollary}\label{cor:continuous_exponent}
For any real number $s\in [1, 7/3]$, there exists a sequence of non-singular graphs $G_m$ $(m\in \mathbb{N})$ such that 
\[n^+(G_m)=\Theta(n^-(G_m)^{s})\]
as $m\to \infty$. 
\end{corollary}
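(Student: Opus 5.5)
The plan is to pad the graphs $G_m$ of Theorem \ref{thm:inertia_counter} with disjoint copies of $K_2$, tuning the number of copies to hit any exponent $s\in[1,7/3]$. Write $a_m = n^-(G_m)$ and $b_m = n^+(G_m)$, so that $b_m=\Theta(a_m^{7/3})$ and $a_m\to\infty$. The spectrum of a disjoint union is the union of the spectra, and $K_2$ has eigenvalues $1$ and $-1$. Hence adding $t$ disjoint copies of $K_2$ to $G_m$ gives a graph $H$ with
\[
n^+(H)=b_m+t,\qquad n^-(H)=a_m+t .
\]
Non-singularity is preserved, since neither $G_m$ nor $K_2$ has a zero eigenvalue.

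Fix $s\in[1,7/3]$; the case $s=7/3$ is just $t=0$. I would choose $t=t_m$ so that $a_m+t_m$ is of order $b_m^{1/s}$. Concretely, set
\[
t_m=\max\bigl(0,\lceil b_m^{1/s}\rceil - a_m\bigr),\qquad H_m = G_m \cup t_m K_2 .
\]
Then $n^-(H_m)=\max(a_m,\lceil b_m^{1/s}\rceil)$.

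First I check that $n^-(H_m)=\Theta(b_m^{1/s})$. We have $b_m^{1/s}=\Theta(a_m^{7/(3s)})$ and $7/(3s)\ge 1$, so $a_m=O(b_m^{1/s})$. Hence the maximum above is $\Theta(b_m^{1/s})$.

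Next I check that $n^+(H_m)=b_m+t_m=\Theta(b_m)$. Since $t_m\le b_m^{1/s}+1\le b_m+1$ when $s\ge1$, we get $b_m\le n^+(H_m)\le 2b_m+1$. Therefore
\[
n^+(H_m)=\Theta(b_m)=\Theta\bigl(n^-(H_m)^{s}\bigr),
\]
as required.

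The only mildly delicate point is confirming that $t_m$ is a non-negative integer and that the $\Theta$-constants are uniform in $m$. The ceiling and the $\max$ handle the first issue, and the second follows from the constants in Theorem \ref{thm:inertia_counter} together with $s\ge 1$. There is no real obstacle beyond this bookkeeping.
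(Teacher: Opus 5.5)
Your proposal is correct and follows the paper's approach: take the disjoint union of $G_m$ with $t_m$ copies of $K_2$, each of which adds one positive and one negative eigenvalue. The paper states this in a single sentence, and your choice $t_m=\max(0,\lceil b_m^{1/s}\rceil-a_m)$ supplies the missing bookkeeping, with $s\le 7/3$ giving $n^-=\Theta(b_m^{1/s})$ and $s\ge 1$ giving $n^+=\Theta(b_m)$.
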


To prove Theorem \ref{thm:inertia_counter}, we use a graph operation called Non-complete Extended P-Sum (NEPS) introduced by Cvetkovi\'{c} \cite{Cvetkovic_1971} (cf. \cite{Cvetkovic_Simic_1993, Cvetkovic_Doob_Sachs_1995}). This operation is well-designed from a spectral viewpoint, as its adjacency matrix and eigenvalues can be easily determined. NEPS have been used to disprove several important conjectures. For instance, Razborov \cite{Razborov_1992}, and Cioab\u{a} and Tait \cite{Cioaba_Tait_2011} used NEPS to construct counterexamples for the Alon--Saks--Seymour Conjecture and the Rank-Coloring Conjecture. To construct the sequence of graphs $G_m$, we take NEPS of copies of the complete graph $K_m$ with respect to a basis that is defined using a well-designed polynomial, which we call $(q,s)$-admissible polynomials. Essentially, the $(q,s)$-admissible polynomials provide a convenient computational framework to construct graphs with $n^+ \approx (n^-)^{\frac{q}{s}}$ (see Theorem \ref{thm:admissible_polynomials}). So to prove our Theorem \ref{thm:inertia_counter}, we only need to find a $(7,3)$-admissible polynomial, which we do in Theorem \ref{thm:7_3_admissible}. To improve our result, one only needs to find a $(q,s)$-admissible polynomial with a larger ratio $\frac{q}{s}$. We believe that the ideas developed may find further application to other problems related to inertia and rank. 

We discuss $(q,s)$-admissible polynomials and NEPS in Section \ref{section:large_positive_inertia} along with a proof of Theorem \ref{thm:inertia_counter}. We conclude with some remarks in Section \ref{section:conclusion}. 

\section{Negative inertia vs non-positive inertia}
\label{section:negative_vs_non_positive}

For completeness, here we will show that Conjecture \ref{conj:negative_inertia_square} and Problem \ref{prob:Mohar} are essentially the same questions. 

We first recall two results from matrix theory. 

\begin{theorem}[Interlacing Theorem] \label{thm:interlacing}
Let $A$ be an $n\times n$ Hermitian matrix with eigenvalues $\lambda_1 \ge \lambda_2 \ge \cdots \ge \lambda_n$. Let $B$ be an $m\times m$ principal submatrix of $A$ with eigenvalues $\theta_1\ge \theta_2\ge \cdots \ge \theta_m$. Then
\[\lambda_i \ge \theta_i \ge \lambda_{i+n-m}.\]
\end{theorem}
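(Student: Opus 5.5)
The plan is the standard Courant--Fischer argument. I will write $A=\sum_{j=1}^n \lambda_j u_ju_j^*$ with an orthonormal eigenbasis $u_1,\dots,u_n$, and rely on the min--max characterization
\[\lambda_i(A)=\max_{\dim U=i}\ \min_{0\ne x\in U}\frac{x^*Ax}{x^*x}=\min_{\dim W=n-i+1}\ \max_{0\ne x\in W}\frac{x^*Ax}{x^*x}.\]
I will either take it as known or derive it quickly. For $U=\mathrm{span}(u_1,\dots,u_i)$ the Rayleigh quotient is at least $\lambda_i$. Conversely, any $i$-dimensional $U$ meets $\mathrm{span}(u_i,\dots,u_n)$ nontrivially, since the dimensions add to $n+1$, and on that intersection the quotient is at most $\lambda_i$. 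The min form follows in the same way.

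After permuting coordinates, which does not change the spectrum, assume that $B$ is the leading $m\times m$ principal block of $A$. Let $E\subseteq\mathbb{C}^n$ be the span of the first $m$ standard basis vectors. For $y\in\mathbb{C}^m$, let $x=(y,0)\in E$; then $x^*Ax=y^*By$ and $x^*x=y^*y$. In this way, subspaces of $\mathbb{C}^m$ correspond isometrically to subspaces of $E$, and the Rayleigh quotients agree.

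For the upper bound $\theta_i\le\lambda_i$, choose an $i$-dimensional $V\subseteq\mathbb{C}^m$ attaining $\theta_i$ in the max--min formula for $B$. Its image in $E$ is an $i$-dimensional subspace of $\mathbb{C}^n$ on which the Rayleigh quotient of $A$ is at least $\theta_i$. The max--min formula for $A$ then gives $\lambda_i\ge\theta_i$.

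For the lower bound $\theta_i\ge\lambda_{i+n-m}$, I see two options. One is to apply the upper bound to $-A$ and $-B$, whose eigenvalues are reversed: $\lambda_k(-A)=-\lambda_{n+1-k}(A)$ and $\theta_k(-B)=-\theta_{m+1-k}(B)$. Taking $k=m+1-i$ gives $-\theta_i\le-\lambda_{n-m+i}$, which is the claim. The other is to argue directly. Choose an $(m-i+1)$-dimensional $W\subseteq\mathbb{C}^m$ attaining $\theta_i$ in the min--max formula for $B$. Its image in $E$ has dimension $m-i+1=n-(i+n-m)+1$, so the min--max formula for $A$ at index $i+n-m$ gives $\lambda_{i+n-m}\le\max_{W}R_A=\theta_i$.

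There is no real obstacle here. The only steps needing care are the dimension-intersection argument in the proof of the min--max principle and the index bookkeeping when passing to $-A$.
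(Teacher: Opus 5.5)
Your proof is correct. The max--min and min--max characterizations are derived properly, since the count $i+(n-i+1)=n+1>n$ forces the needed intersections. The embedding $y\mapsto (y,0)$ preserves Rayleigh quotients. For the lower bound, both the reflection to $-A,-B$ with $k=m+1-i$ and the direct min--max argument at index $i+n-m$ are correct, because $\dim W = m-i+1 = n-(i+n-m)+1$.

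The paper gives no proof of this statement. It recalls it as a standard fact from matrix theory, and your Courant--Fischer argument is the usual textbook proof, so there is nothing to reconcile.

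The paper only uses the consequence $n^+(H)\le n^+(G)$ and $n^-(H)\le n^-(G)$ for induced subgraphs, in Lemma~\ref{lemma:induced_subgraph_same_rank}. This follows from your two inequalities: $\theta_i>0$ forces $\lambda_i>0$, and $\theta_i<0$ forces $\lambda_{i+n-m}<0$ with $i\mapsto i+n-m$ injective. It can also be read off even more directly from your restriction idea. Indeed, $n^+(A)$ is the largest dimension of a subspace on which $x^*Ax>0$, and any such subspace for $B$ embeds isometrically into $E$.
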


\begin{lemma}[{\cite[Theorem 1.1]{Barrett_Butler_Catral_Fallat_Hall_Hogben_vanden_Young_2014}}]\label{lemma:principal_submatrix_rank_r}
Any real symmetric matrix with rank $r$ contains a non-singular principal submatrix of order $r$.
\end{lemma}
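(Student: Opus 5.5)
The plan is the standard row-selection argument: exhibit an index set $S$ with $|S|=r$ and show directly that the principal submatrix $A[S,S]$ has full rank. Symmetry is what lets us pass from the rows indexed by $S$ to the columns indexed by $S$.

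Let $A$ be an $n\times n$ real symmetric matrix of rank $r$. First, choose a set $S\subseteq\{1,\dots,n\}$ of $r$ indices such that the rows of $A$ indexed by $S$ are linearly independent. This is possible because the row rank equals $r$. Write $A[S,:]$ for the $r\times n$ submatrix formed by these rows. Since $A$ has rank $r$, every row of $A$ lies in the span of these rows. Hence there is an $n\times r$ matrix $C$ with
\[
A = C\,A[S,:],
\]
and we may take $C[S,:]=I_r$.

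Next, restrict this factorization to the columns indexed by $S$. This gives
\[
A[:,S] = C\,A[S,S].
\]
Because $A$ is symmetric, $A[:,S]=(A[S,:])^{T}$. The right-hand side is the transpose of a matrix with $r$ linearly independent rows, so it has rank $r$. Therefore
\[
r=\rank(A[:,S])=\rank\bigl(C\,A[S,S]\bigr)\le \rank(A[S,S])\le r .
\]
So the $r\times r$ principal submatrix $A[S,S]$ has rank $r$, i.e.\ it is non-singular, which proves the lemma.

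There is no real obstacle. The one point needing care is the factorization step: it uses that the chosen rows span the whole row space, and then symmetry to identify $A[:,S]$ with the transpose of $A[S,:]$. Without symmetry the conclusion fails, for example for $\begin{pmatrix}0&1\\0&0\end{pmatrix}$, which has rank $1$ but no non-zero diagonal entry.
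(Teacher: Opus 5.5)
Your proof is correct. Take $S$ so that the rows $A[S,:]$ form a basis of the row space. Restricting $A=C\,A[S,:]$ to the columns in $S$ gives $A[:,S]=C\,A[S,S]$, and symmetry makes $A[:,S]=(A[S,:])^{T}$ of rank $r$, which forces $A[S,S]$ to be non-singular. The paper gives no proof of its own and simply cites Barrett et al.\ (Theorem 1.1); your argument is essentially the standard proof of that fact, and since it uses only symmetry it works over any field.
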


The following is a consequence of Lemma \ref{lemma:principal_submatrix_rank_r} combined with the Interlacing Theorem that allows us to reduce the problem for general graphs to non-singular graphs. 

\begin{lemma}\label{lemma:induced_subgraph_same_rank}
Every graph $G$ contains an induced subgraph $H$ such that 
    \[n^+(H) = n^+(G),\quad n^-(H)=n^-(G), \quad |V(H)| = \rank(G).\]
\end{lemma}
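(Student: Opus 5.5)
We apply Lemma \ref{lemma:principal_submatrix_rank_r} to $A(G)$ and then use the Interlacing Theorem twice to show that the inertia does not change. Let $r=\rank(G)=n^+(G)+n^-(G)$, where $n=|V(G)|$. Since $A(G)$ is real symmetric of rank $r$, Lemma \ref{lemma:principal_submatrix_rank_r} gives a non-singular principal submatrix $B$ of order $r$. A principal submatrix of an adjacency matrix is the adjacency matrix of the subgraph induced on the corresponding vertex set. So $B=A(H)$ for an induced subgraph $H$ with $|V(H)|=r$ and $n^0(H)=0$, which means $n^+(H)+n^-(H)=r$.

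Next we compare signs of eigenvalues via Theorem \ref{thm:interlacing} with $m=r$, writing $\theta_i$ for the eigenvalues of $B$. Let $p=n^+(G)$ and $q=n^-(G)$, so $n-r=n^0(G)$.

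For each $i\ge p+1$, the upper inequality gives $\theta_i\le\lambda_i\le 0$. Since $B$ is non-singular, $\theta_i<0$. Hence $n^+(H)\le p$.

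For each $i\le p$, the lower inequality gives $\theta_i\ge\lambda_{i+n-r}$. The index satisfies $i+n-r\le p+n^0(G)=n-q$, so $\lambda_{i+n-r}\ge 0$ and therefore $\theta_i\ge 0$. Non-singularity again forces $\theta_i>0$, so $n^+(H)\ge p$.

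Thus $n^+(H)=n^+(G)$, and $n^-(H)=r-p=q=n^-(G)$ follows. Alternatively, the symmetric argument on negative eigenvalues gives $n^-(H)\le n^-(G)$, which also suffices because the two counts sum to $r$.

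The only potentially delicate step is the index bookkeeping in the lower interlacing bound. This is where the count of zero eigenvalues of $G$ exactly absorbs the shift $n-r$. Everything else is immediate from the non-singularity of $B$.
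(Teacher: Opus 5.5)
Your proof is correct and follows the paper's route: Lemma~\ref{lemma:principal_submatrix_rank_r} gives a non-singular principal submatrix of order $r$, hence an induced subgraph $H$, and interlacing then pins down its inertia. The paper finishes with exactly your ``alternative'': one-sided interlacing gives $n^+(H)\le n^+(G)$ and $n^-(H)\le n^-(G)$, and since both pairs sum to $r$ equality is forced, which avoids the index bookkeeping in the lower interlacing bound.
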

\begin{proof} By Lemma~\ref{lemma:principal_submatrix_rank_r}, there exists non-singular induced subgraph $H$ of $G$ with $\rank(H)=\rank(G)$. Note that, by Theorem~\ref{thm:interlacing}, $n^+(G)\geq n^+(H)$ and $n^-(G)\geq n^-(H)$, and therefore
\[
\rank(H) = \rank(G) = n^+(G) + n^-(G) \geq n^+(H) + n^-(H) = \rank(H),
\]
from which follows that $n^+(H)=n^+(G)$ and $n^-(H)=n^-(G)$. Moreover, since $H$ is non-singular, 
\[|V(H)|=\rank(H)=\rank(G).\qedhere\] 
\end{proof}

Now, for given $k\in \mathbb{N}$, define 
\[ P(k) := \max_{G}\{n^+(G): n^-(G) \le  k\}\quad  \text{and}\quad M(k):=\max_{G}\{n: n^0(G)+n^-(G)\le k\}.\]
If $G$ is a graph with $n^-(G) + n^0(G) \le k$ and order $M(k)$, then 
\[ M(k) = n^+(G) + n^0(G) + n^-(G)\le P(k) + k.\]
Conversely, if $G$ is a graph with $n^-(G)\le k$ and $n^+(G)=P(k)$, then by Lemma \ref{lemma:induced_subgraph_same_rank}, $G$ has an induced subgraph $H$ with same positive and negative inertia. Add $k-n^-(G)$ isolated vertices to $H$ and call the new graph $H'$. Then $n^0(H') + n^-(H') = (k-n^-(G)) + n^-(G) = k$ and $|V(H')|=P(k)+k$. Thus, 
\[P(k) + k \le M(k).\]
We conclude that 
\[ M(k) = P(k)+k.\]
In particular, 
\[P(k) = O(k^2) \quad \iff\quad  M(k)=O(k^2) \quad \text{as}\quad k\to \infty.\]
Thus, Conjecture \ref{conj:negative_inertia_square} has a positive answer if and only if Problem \ref{prob:Mohar} does. 

\section{Graphs with large positive inertia}
\label{section:large_positive_inertia}

\subsection{$(q,s)$-admissible polynomials}

Let $s,q\in \mathbb{N}$ with $s\leq q$. Consider a multivariate polynomial
\[P(x_1,\dots, x_q)=\sum_{T\subseteq [q]} c_{T} \prod_{i\in T}x_i\, \in \mathbb{Z}[x_1,\dots,x_q].\]
We say that $P$ is a \emph{Boolean polynomial} if $P(x_1, \ldots, x_q)\in \{0,1\}$ for all $(x_1, \ldots, x_q)\in \{0,1\}^q$. We say that a Boolean polynomial $P$ is \emph{$(q,s)$--admissible} if all of the following hold:
\begin{equation}
    P(1,\dots, 1)=0, \qquad c_{[q]}>0, \qquad  s=\max\{|T|: T\subseteq [q],  c_T<0\}.
\end{equation}

The following result allows us to construct graphs with large positive inertia relative to their negative inertia.  

\begin{theorem}\label{thm:admissible_polynomials}
Given a $(q,s)$-admissible polynomial $P$, there exists a sequence of graphs $G_m$ $(m\in \mathbb{N})$ of order $m^q$ such that 
\[ n^+(G_m) = \Theta(m^q), \quad n^-(G_m)=\Theta(m^s)\quad \text{and}\quad n^0(G_m)=0\quad \text{as}\quad m\to \infty.
\]
\end{theorem}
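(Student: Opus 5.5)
Given the admissible polynomial $P$, I will take $G_m$ to be a NEPS of $q$ copies of $K_m$. Identify the vertex set with $[m]^q$ and join $(u_1,\dots,u_q)$ to $(v_1,\dots,v_q)$ whenever $P(\beta_1,\dots,\beta_q)=1$, where $\beta_i=1$ if $u_i\ne v_i$ and $\beta_i=0$ if $u_i=v_i$. Since $P$ is Boolean, this is a well-defined 0/1 relation, and it is symmetric.

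There are no loops. For $u=v$ all $\beta_i=0$, so loops are absent exactly when $P(0,\dots,0)=c_\emptyset=0$. Admissibility as stated only gives $P(1,\dots,1)=0$. So I will either use the complementary encoding, where $x_i$ marks equality of coordinates, or replace $P$ by $P(1-x_1,\dots,1-x_q)$. With $x_i=1$ meaning $u_i=v_i$, the condition $P(1,\dots,1)=0$ is precisely the no-loop condition. I will adopt that convention: adjacency holds iff $P(\epsilon)=1$, where $\epsilon_i=[u_i=v_i]$.

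Next I compute the adjacency matrix. Write $J$ for the all-ones $m\times m$ matrix, $I$ for the identity, and $A=A(K_m)=J-I$. The matrix of the relation $[u_i=v_i]$ is $I$, and that of $[u_i\ne v_i]$ is $A$. Expanding $P$ multilinearly in the indicator variables, the adjacency matrix is
\[
A(G_m)=\sum_{T\subseteq[q]} c_T \bigotimes_{i=1}^q M_i^{(T)},
\]
where $M_i^{(T)}=I$ if $i\in T$ and $M_i^{(T)}=J$ otherwise. The point is that the monomial $\prod_{i\in T}x_i$ with no constraint on the other coordinates gives $I$ on $T$ and $J$ off $T$. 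All these matrices commute and are simultaneously diagonalized by $\mathbf 1/\sqrt m$ (with $J$-eigenvalue $m$) and its orthogonal complement (with $J$-eigenvalue $0$). For a joint eigenvector indexed by $S\subseteq[q]$, the set of coordinates carrying the $\mathbf 1$ direction, the eigenvalue is
\[
\lambda_S=\sum_{T\supseteq [q]\setminus S} c_T\, m^{|S\setminus T|}\cdot 1,
\]
with multiplicity $(m-1)^{q-|S|}$. The $J$-factors outside $T$ force those coordinates into $S$; otherwise the contribution is $0$.

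Counting then gives the result. For $S=\emptyset$ only $T=[q]$ contributes, so $\lambda_\emptyset=c_{[q]}>0$ with multiplicity $(m-1)^q=\Theta(m^q)$. This gives $n^+=\Theta(m^q)$, since the total order is $m^q$. For each $S$, $\lambda_S$ is a polynomial in $m$ whose leading term comes from the $T$ with $T\supseteq[q]\setminus S$ minimizing $|T|$ among those with $c_T\ne0$, or more carefully from the smallest such $|T|$. I would argue that for large $m$ every $\lambda_S$ is nonzero, with sign equal to the sign of its leading coefficient. Then $\lambda_S<0$ forces the leading $T$ to have $c_T<0$, hence $|T|\le s$ and $|[q]\setminus S|\le|T|\le s$. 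The multiplicity is then $(m-1)^{q-|S|}\le m^{s}$, which gives $n^-=O(m^s)$. For the lower bound I take $T_0$ with $|T_0|=s$ and $c_{T_0}<0$, and $S=[q]\setminus T_0$. The sum then runs over $T\supseteq T_0$, and its leading term is $c_{T_0}m^{q-s}$ provided the terms with $T\supsetneq T_0$ have smaller powers of $m$, which they do. So $\lambda_S<0$ with multiplicity $\Theta(m^s)$.

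The main obstacle is $n^0=0$: every $\lambda_S$ must be nonzero, and $\lambda_S$ could in principle vanish identically as a polynomial in $m$ when all $c_T$ with $T\supseteq[q]\setminus S$ vanish. I would try to rule this out via Boolean/Möbius structure. The coefficients over supersets of a set correspond to values of $P$ restricted to a subcube, and $\lambda_S$ evaluated at $m=1$ recovers $P$ at a point. Failing that, I would handle any $S$ with $\lambda_S\equiv0$ by a small modification, or by noting that the paper only needs the $\Theta$ bounds and treating this case via Lemma \ref{lemma:induced_subgraph_same_rank}.
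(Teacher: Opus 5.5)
Your graph, with the equality encoding $\epsilon_i=[u_i=v_i]$, is exactly the paper's $\NEPS(K_m,\dots,K_m;\mathcal{B}_P)$. The construction, the spectrum, and the bounds on $n^+$ and $n^-$ are all correct.

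Your route to the spectrum is more direct than the paper's. Expanding $P$ in monomials gives $A(G_m)=\sum_T c_T\bigotimes_i M_i^{(T)}$ with $I$ and $J$ factors, so $\lambda_S=\sum_{T\supseteq[q]\setminus S}c_T\,m^{q-|T|}$ is immediate. The paper instead expands over $\mathcal{F}_P$ with $J-I$ factors and needs the partial-derivative identity of Lemma~\ref{lemma:pol_eigen} to reach the same formula; your $\lambda_S$ is the paper's $\lambda_{[q]\setminus S}$.

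One small tightening: for $n^-=O(m^s)$ you do not need leading terms at all. Also, \emph{the leading $T$} is ill-defined if several sets of the same size cancel. If $|[q]\setminus S|>s$, then every $T$ in the sum has $|T|>s$, hence $c_T\ge 0$ and $\lambda_S\ge 0$ for every $m$.

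The genuine gap is $n^0(G_m)=0$, which you leave unresolved. The missing observation is simple. For every $S$, the set $T=[q]$ satisfies $T\supseteq[q]\setminus S$, and it is the only $T$ contributing to the coefficient of $m^0$. So every $\lambda_S$ is a polynomial in $m$ with constant term $c_{[q]}>0$.

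The scenario you worry about, where all $c_T$ with $T\supseteq[q]\setminus S$ vanish, is therefore ruled out by the hypothesis $c_{[q]}>0$. You already used that hypothesis for $\lambda_\emptyset$. Hence no $\lambda_S$ vanishes identically and each has finitely many roots. Since there are only $2^q$ sets $S$, all eigenvalues of $G_m$ are nonzero for $m$ sufficiently large; this is precisely the paper's argument.

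The fallbacks you list are unnecessary. The one via Lemma~\ref{lemma:induced_subgraph_same_rank} would also not prove the statement as written, because the non-singular induced subgraph has order $\rank(G_m)$, not $m^q$.
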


In particular, if there exists a $(q,s)$-admissible polynomial, then there exists a sequence of graphs with $n^+ = \Theta((n^-)^{\frac{q}{s}})$. So our problem boils down to finding $(q,s)$-admissible polynomials such that the ratio $\frac{q}{s}$ is large. This is much more convenient computationally. We will prove Theorem \ref{thm:admissible_polynomials} in the next Subsection \ref{subsection:NEPS}.

Let us here show the existence of a $(7,3)$-admissible polynomial which implies our main result Theorem \ref{thm:inertia_counter} in light of the above Theorem \ref{thm:admissible_polynomials}. 

\begin{theorem}\label{thm:7_3_admissible}
Let
\[
a\defeq x_1 + x_2 - x_1 x_2,\qquad b\defeq x_3 + x_4 - x_3 x_4,\qquad
c\defeq x_1 x_2 x_3 x_4,
\]
and define
\[
P\defeq a b-c+x_5 x_6 (1-a)+x_5 x_7 (1-b)
-x_5 x_6 x_7(1-c)\in \mathbb{Z}[x_1, \ldots, x_7].
\]
Then $P$ is $(7,3)$--admissible.
\end{theorem}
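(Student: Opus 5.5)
We need to verify four things: that $P$ is multilinear, that it is Boolean, that $P(1,\dots,1)=0$ with $c_{[7]}>0$, and that the largest negative monomial has degree exactly $3$. Multilinearity is immediate. $a$, $b$, $c$ are multilinear in disjoint or overlapping variable sets, but every product appearing in $P$ involves each $x_i$ at most once. Indeed, $ab$ uses $x_1,\dots,x_4$ with $a$ and $b$ on disjoint variables. The products $x_5x_6(1-a)$, $x_5x_7(1-b)$ and $x_5x_6x_7(1-c)$ multiply by variables outside $\{x_1,\dots,x_4\}$. So $P$ expands as $\sum_T c_T\prod_{i\in T}x_i$ with integer coefficients.

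\textbf{Boolean property.} On $\{0,1\}^7$ we have $a=x_1\vee x_2$, $b=x_3\vee x_4$ and $c=x_1x_2x_3x_4$, all in $\{0,1\}$. I would split on $x_5$.

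If $x_5=0$, then $P=ab-c$. Since $c=1$ forces $a=b=1$, we get $ab-c\in\{0,1\}$.

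If $x_5=1$, then $P=ab-c+x_6(1-a)+x_7(1-b)-x_6x_7(1-c)$. I would do a short case analysis on $(a,b,c)$, where $c=1$ implies $a=b=1$:
\begin{itemize}
\item $(1,1,1)$: $P=0$.
\item $(1,1,0)$: $P=1-x_6x_7\in\{0,1\}$.
\item $(1,0,0)$: $P=x_7-x_6x_7\in\{0,1\}$.
\item $(0,1,0)$: $P=x_6-x_6x_7\in\{0,1\}$.
\item $(0,0,0)$: $P=x_6+x_7-x_6x_7\in\{0,1\}$.
\end{itemize}
So $P$ is Boolean.

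\textbf{Value at the all-ones point.} At $(1,\dots,1)$ we have $a=b=c=1$, so $P=1-1+0+0-0=0$.

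\textbf{Coefficient bookkeeping (the main obstacle).} The remaining work is to track the top coefficient and the signs of all coefficients.

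First, the top coefficient $c_{[7]}$. The only monomial of degree $7$ comes from $-x_5x_6x_7(1-c)=-x_5x_6x_7+x_1x_2x_3x_4x_5x_6x_7$, so $c_{[7]}=1>0$.

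Next, the negative coefficients. I would expand $P$ fully:
\begin{itemize}
\item $ab$ gives $+$ on $x_ix_j$ (with $i\in\{1,2\}$, $j\in\{3,4\}$), $-$ on the degree-$3$ terms $x_1x_2x_j$ and $x_ix_3x_4$, and $+$ on $x_1x_2x_3x_4$.
\item $-c$ cancels that $+x_1x_2x_3x_4$, so no degree-$4$ term survives from $x_1,\dots,x_4$ alone.
\item $x_5x_6(1-a)$ gives $+x_5x_6$, $-x_1x_5x_6$, $-x_2x_5x_6$, and $+x_1x_2x_5x_6$.
\item $x_5x_7(1-b)$ gives $+x_5x_7$, $-x_3x_5x_7$, $-x_4x_5x_7$, and $+x_3x_4x_5x_7$.
\item $-x_5x_6x_7(1-c)$ gives $-x_5x_6x_7$ and $+x_1\cdots x_7$.
\end{itemize}
These monomials have pairwise distinct supports, so no cancellation occurs beyond $c$ against the corresponding term of $ab$. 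Hence every negative coefficient sits on a monomial of degree exactly $3$, and negative ones exist (for example $-x_5x_6x_7$), so $s=3$.

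Finally, $s\le q$ holds, so $P$ is $(7,3)$-admissible. The expansion is where errors are most likely, so I would double-check the cancellation of $x_1x_2x_3x_4$ carefully.
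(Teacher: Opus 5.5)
Your proof is correct and follows essentially the same route as the paper. The Boolean property comes from a case analysis on $(a,b,c)$ using that $c=1$ forces $a=b=1$; splitting off $x_5$ first is only a cosmetic difference. Then the full multilinear expansion shows $c_{[7]}=1$ and that every negative coefficient sits on a degree-$3$ monomial, and your expansion and cancellation check agree with the paper's.
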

\begin{proof}
Note that if $x_i\in \{0,1\}$ for every $i$, then $a,b,c\in \{0,1\}$. We partition the possible cases according to the values of $a$, $b$ and $c$. Observe that $c=1$ implies $a=1=b$. Hence, the possible cases are 
\[
P(x_1,\dots, x_7)=
\begin{cases}
0&\text{ if }a=b=c=1;\\
1-x_5x_6x_7&\text{ if }a=b=1,\ c=0;\\
x_5x_6(1-x_7)&\text{ if }a=0,\ b=1,\ c=0;\\
x_5x_7(1-x_6)&\text{ if }a=1,\ b=c=0;\\
x_5(x_6 + x_7- x_6 x_7)&\text{ if }a=b=c=0.
\end{cases}
\]
In all cases, if $(x_1,\dots, x_7)\in \{0,1\}^7$, then $P(x_1,\dots, x_7)\in \{0,1\}$, and so $P$ is Boolean. Clearly,  $P(1,\dots, 1)=0$.

Now, the multilinear expansion of the polynomial $P$ is
\[
\begin{aligned}
P= & \,\, x_1 x_3+x_2 x_3+x_1 x_4+x_2 x_4+x_5 x_6+x_5 x_7\\
&-x_1 x_2 x_3 -x_1 x_2 x_4-x_1 x_3 x_4-x_2 x_3 x_4 -x_1 x_5 x_6-x_2 x_5 x_6-x_3 x_5 x_7-x_4 x_5 x_7-x_5 x_6 x_7\\
&+x_1 x_2 x_5 x_6+x_3 x_4 x_5 x_7+x_1 x_2 x_3 x_4 x_5 x_6 x_7.
\end{aligned}
\]
Thus, $c_{[7]}=1>0$ and $\max\{|T|: T\subseteq [7],  c_T<0\}=3$. Hence, $P$ is $(7,3)$-admissible.
\end{proof}

\subsection{NEPS and Proof of Theorem \ref{thm:admissible_polynomials}}
\label{subsection:NEPS}

Let us first recall the NEPS operation; for further details on this operation, refer \cite{Cvetkovic_1971, Cvetkovic_Simic_1993, Cvetkovic_Doob_Sachs_1995}. Given $q\in \mathbb{N}$, $\mathcal{B}\subseteq \{0,1\}^q\setminus \{(0, \ldots, 0)\}$ and graphs $G_1, \ldots, G_q$, the graph denoted by
\[ \NEPS(G_1, \ldots, G_q; \mathcal{B})\]
is the graph whose vertex set is the Cartesian product of the vertex sets of $G_i$'s and two vertices $(u_1, \ldots, u_q)$ and $(v_1, \ldots, v_q)$ are adjacent if and only if there exists $(b_1, \ldots, b_q)\in \mathcal{B}$ such that $u_i = v_i$ if $b_i=0$ and $u_iv_i\in E(G_i)$ if $b_i=1$ for all $1\le i\le q$. The set $\mathcal{B}$ is called the \emph{basis} of $\NEPS(G_1, \ldots, G_q; \mathcal{B})$.  
It is known that the adjacency matrix of this NEPS graph is given by (see \cite[Section 2.5]{Cvetkovic_Doob_Sachs_1995})
\begin{equation} \label{eq:NEPS_adjacency}
    \sum_{(b_1, \ldots, b_q)\in \mathcal{B}} A(G_1)^{b_1}\otimes \cdots \otimes A(G_q)^{b_q},
\end{equation}
where $\otimes$ denotes the tensor product. 

For a subset $S\subseteq [q]$, let $1_S\in \{0,1\}^q$ denote the indicator vector of $S$. Now, given a $(q,s)$-admissible polynomial
\begin{equation}\label{eq:P_F_defn}
P(x_1,\dots, x_q)=\sum_{T\subseteq [q]} c_{T} \prod_{i\in T}x_i,
\end{equation}
define 
\[\mathcal{F}_P:=\{S\subseteq [q]: P(1_{[q]\setminus S}) = 1 \},\]
where $[q]\setminus S$ denotes the complement of $S$ in $[q]$. Let
\[ \mathcal{B}_P:=\left\{1_S: S\in \mathcal{F}_P\right\}.\]
Note that $(0,\ldots, 0)\notin \mathcal{B}_P$ since $P(1_{[q]}) = P(1,\ldots, 1) = 0$. For sufficiently large $m\in \mathbb{N}$ and the basis $\mathcal{B}_P$ corresponding to $P$, define 
\[ G_m:= \NEPS(\underbrace{K_m, \ldots, K_m}_{q}; \mathcal{B}_P),\]
where $K_m$ denotes the complete graph of order $m$. We will now argue that the graphs $G_m$ $(m\in \mathbb{N})$ have the desired inertia, thus proving Theorem \ref{thm:admissible_polynomials}. 

We first determine the adjacency matrix of $G_m$. Note that $A(K_m) = J_m - I_m$, where $J_m$ and $I_m$ denote the all-ones and the identity matrix of order $m$. For a subset $S\subseteq [q]$ consider the matrix
\[
A_S \defeq \bigotimes_{i=1}^{q} M_i(S), \quad \text{where}\quad M_i(S)\defeq 
\begin{cases} J_m - I_m & \text{if }i\in S; \\ 
I_m & \text{otherwise.}
\end{cases} 
\]
Using the definitions of $\mathcal{F}_P$, $\mathcal{B}_P$ and \eqref{eq:NEPS_adjacency}, the following lemma is immediate.  

\begin{lemma}\label{lemma:adj_matrix_G_m} The adjacency matrix of $G_m$ is given by
\[
A(G_m) = \sum_{S\in \mathcal{F}_P} A_S = \sum_{S\in \mathcal{F}_P} \bigotimes_{i=1}^q M_i(S).
\]
\end{lemma}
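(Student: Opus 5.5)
The plan is to unwind the definitions and apply \eqref{eq:NEPS_adjacency} directly. First I would record what $A(G_i)^{b_i}$ means when $G_i=K_m$: for $b_i=0$ it is $A(K_m)^0=I_m$, and for $b_i=1$ it is $A(K_m)=J_m-I_m$. This matches the convention in \eqref{eq:NEPS_adjacency}: the zero exponent corresponds to "$u_i=v_i$", whose incidence matrix is $I_m$, and the exponent one corresponds to adjacency in $K_m$.

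Next I would observe that the map $S\mapsto 1_S$ is a bijection from $\mathcal{F}_P$ onto $\mathcal{B}_P$. This is just the definition of $\mathcal{B}_P$, together with injectivity of indicator vectors. It also needs the remark already made that $(0,\ldots,0)\notin\mathcal{B}_P$, so $\mathcal{B}_P$ is a legitimate NEPS basis. Reindexing the sum in \eqref{eq:NEPS_adjacency} over $S\in\mathcal{F}_P$ then gives, for $b=1_S$, the $i$-th tensor factor $A(K_m)^{(1_S)_i}$. This factor equals $J_m-I_m$ if $i\in S$ and $I_m$ otherwise, which is exactly $M_i(S)$. Hence the summand is $A_S$, and summing over $S\in\mathcal{F}_P$ gives the claim.

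There is no real obstacle. The only point needing a moment's care is that different basis elements never produce overlapping edges, so that summing the tensor products gives a $0/1$ matrix and not multiplicities. Rather than re-prove this, I would rely on the cited identity \eqref{eq:NEPS_adjacency}. If one wants to check it directly, note the following. For a fixed pair of vertices $u,v$, the pattern $b_i=[u_i\neq v_i]$ is forced, because in $K_m$ we have $u_iv_i\in E$ iff $u_i\ne v_i$. So at most one basis vector contributes a nonzero entry at position $(u,v)$, and that entry equals $1$.
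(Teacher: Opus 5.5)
Your proposal is correct and is essentially the paper's argument: the paper says the lemma is immediate from the definitions of $\mathcal{F}_P$, $\mathcal{B}_P$ and \eqref{eq:NEPS_adjacency}, and your reindexing via $S\mapsto 1_S$ together with $A(K_m)^{(1_S)_i}=M_i(S)$ is exactly that unwinding. Your optional direct check, that the pattern $b_i=[u_i\neq v_i]$ is forced so the sum is a $0/1$ matrix, is also correct.
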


The eigenvalues of a NEPS graph can be determined using \eqref{eq:NEPS_adjacency} as is done in Theorem 2.23 of the book by Cvetkovi\'{c}, Doob and Sachs \cite{Cvetkovic_Doob_Sachs_1995}. We find an expression for the eigenvalues of $G_m$ that is more amenable for our calculations.  

Let
\[
\mathbb{R}^m = U_0\oplus U_1,\quad \text{where}\quad U_0\defeq \langle 1\rangle,\quad U_1\defeq \langle 1\rangle^{\perp},
\]
and for $T\subseteq [q]$ define
\[
\mathcal{U}_T \defeq \bigotimes_{i=1}^q U_{1_T(i)},\quad \text{where}\quad 1_T(i)\defeq \begin{cases}
1 & \text{ if }i\in T;\\
0 & \text{ otherwise.}
\end{cases}
\]

Observe that, if $T, T'\subseteq [q]$ and $T\neq T'$, then $\mathcal{U}_{T}$ and $\mathcal{U}_{T'}$ are orthogonal (w.r.t. the usual inner product). More than that, it holds
\[
\mathbb{R}^{m^q}\simeq \underbrace{\mathbb{R}^m\otimes \cdots \otimes \mathbb{R}^m}_{q}= \bigoplus_{T\subseteq [q]} \mathcal{U}_T.
\]

\begin{lemma}\label{lemma:eigenvalues_G_m} If $T\subseteq [q]$, then $\mathcal{U}_T$ is an eigenspace of $A(G_m)$ with dimension $(m-1)^{|T|}$ and corresponding eigenvalue 
\[
\lambda_T(G_m) \defeq \sum_{S\in \mathcal{F}_P} (-1)^{|S\cap T|}(m-1)^{|S\setminus T|}. 
\]
\end{lemma}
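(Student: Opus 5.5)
The plan is to show that each summand $A_S$ of Lemma~\ref{lemma:adj_matrix_G_m} acts on every $\mathcal{U}_T$ as a scalar. Then $A(G_m)$ does too, and the scalar is the sum of the individual ones.

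\textbf{Step 1: the factors.} Since $J_m - I_m$ is symmetric with $J_m\1 = m\1$, on $U_0=\langle \1\rangle$ it acts as multiplication by $m-1$. For $v\in U_1$ we have $J_m v = 0$, so on $U_1$ it acts as $-1$. The identity $I_m$ acts as $1$ on both. Hence $M_i(S)$ restricted to $U_{1_T(i)}$ is multiplication by a scalar $\mu_i(S,T)$, where
\[
\mu_i(S,T)=\begin{cases} m-1 & \text{if } i\in S\setminus T,\\ -1 & \text{if } i\in S\cap T,\\ 1 & \text{if } i\notin S.\end{cases}
\]

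\textbf{Step 2: the tensor products.} By multiplicativity of tensor products, $(M_1\otimes\cdots\otimes M_q)(u_1\otimes\cdots\otimes u_q)=M_1u_1\otimes\cdots\otimes M_qu_q$. So on pure tensors $u_1\otimes\cdots\otimes u_q$ with $u_i\in U_{1_T(i)}$, the matrix $A_S$ acts as $\prod_i \mu_i(S,T)=(-1)^{|S\cap T|}(m-1)^{|S\setminus T|}$. These pure tensors span $\mathcal{U}_T$, so by linearity $A_S$ acts on all of $\mathcal{U}_T$ as this scalar. Summing over $S\in\mathcal{F}_P$ via Lemma~\ref{lemma:adj_matrix_G_m} gives that $A(G_m)$ acts on $\mathcal{U}_T$ as $\lambda_T(G_m)$.

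\textbf{Step 3: the dimension.} Since $\dim U_0=1$ and $\dim U_1=m-1$, the dimension of the tensor product is $\prod_i \dim U_{1_T(i)}=(m-1)^{|T|}$.

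\textbf{Remark on the word ``eigenspace''.} The claim should be read as saying that $\mathcal{U}_T$ is contained in the eigenspace for $\lambda_T(G_m)$. Different $T$ may give equal eigenvalues, in which case the full eigenspace is the sum of several $\mathcal{U}_T$. Because $\bigoplus_T\mathcal{U}_T=\mathbb{R}^{m^q}$ is an orthogonal decomposition, this still accounts for the whole spectrum with multiplicities.

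There is no real obstacle here. The only points needing care are the sign bookkeeping in Step 1 and the spanning argument in Step 2.
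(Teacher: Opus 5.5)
Your proof is correct and follows the paper's argument essentially verbatim: both show that $M_i(S)$ acts on $U_{1_T(i)}$ as the scalar $m-1$, $-1$ or $1$, deduce that $A_S$ acts on $\mathcal{U}_T$ as $(-1)^{|S\cap T|}(m-1)^{|S\setminus T|}$, sum over $S\in\mathcal{F}_P$, and get the dimension by counting $q-|T|$ one-dimensional and $|T|$ $(m-1)$-dimensional tensor factors. Your remark that ``eigenspace'' should be read as ``contained in the eigenspace'' correctly clarifies the paper's loose wording and does not affect how the lemma is used later.
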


\begin{proof} Since $\mathcal{U}_T=\bigotimes_{i=1}^q U_{1_T(i)}$, we have that $\mathcal{U}_T$ is a tensor product of $q-|T|$ vector spaces of dimension $1$ and $|T|$ vector spaces of dimension $m-1$, from which follows that its dimension is $(m-1)^{|T|}$.

Now, note that $J_m-I_m$ acts on $U_0$ and $U_1$ as the scalars $m-1$ and $-1$, respectively. As a consequence if $i \in S$, then $M_i(S)$ acts on $U_0$ and $U_1$ as the scalars $m-1$ and $-1$, respectively, and if $i\notin S$, then $M_i(S)$ acts on both $U_0$ and $U_1$ as the scalar $1$. It follows that $M_i(S)$ acts on $U_{1_T(i)}$ as the scalar $m-1$, $-1$ or $1$ if $i\in S\setminus T$, $i\in S\cap T$ or $i\notin S$, respectively. 

As $A_S=\bigotimes_{i=1}^{q} M_i(S)$, we obtain that $A_S$ acts on $\mathcal{U}_T =\bigotimes_{i=1}^q U_{1_T(i)}$ as the scalar $(-1)^{|S\cap T|}(m-1)^{|S\setminus T|}$. Since $A(G_m)=\sum_{S\in \mathcal{F}_P} A_S$ and each $A_S$ acts on $\mathcal{U}_T$ as the scalar $(-1)^{|S\cap T|}(m-1)^{|S\setminus T|}$. We conclude that $\mathcal{U}_T$ is an eigenspace of $A(G_m)$ with corresponding eigenvalue $\sum_{S\in \mathcal{F}_P} (-1)^{|S\cap T|}(m-1)^{|S\setminus T|}$. 
\end{proof}

The next lemma shows that the eigenvalues of $G_m$ can be expressed as a polynomial in $m$ with coefficients coming from the coefficients of the polynomial $P$. 

\begin{lemma}\label{lemma:pol_eigen} For $T\subseteq [q]$,
\[
\lambda_T(G_m) = \sum_{T\subseteq R \subseteq [q]} c_R\, m^{q-|R|},
\]
where $c_R$'s are as defined in \eqref{eq:P_F_defn}. 
\end{lemma}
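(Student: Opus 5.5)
We start from Lemma \ref{lemma:eigenvalues_G_m}, which gives
\[
\lambda_T(G_m)=\sum_{S\in\mathcal{F}_P}(-1)^{|S\cap T|}(m-1)^{|S\setminus T|}.
\]
The plan is to rewrite this sum over $\mathcal{F}_P$ as a sum over all $S\subseteq[q]$ weighted by the Boolean values $P(1_{[q]\setminus S})$. We then expand those values using the coefficients $c_R$ and exchange the order of summation.

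\textbf{Step 1: pass to a sum over all $S$.} Since $P$ is Boolean, the indicator of $S\in\mathcal{F}_P$ is exactly $P(1_{[q]\setminus S})$. Therefore
\[
\lambda_T(G_m)=\sum_{S\subseteq[q]}P(1_{[q]\setminus S})\,(-1)^{|S\cap T|}(m-1)^{|S\setminus T|}.
\]
Evaluating the monomial expansion of $P$ at $1_{[q]\setminus S}$ gives
\[
P(1_{[q]\setminus S})=\sum_{R\subseteq[q]\setminus S}c_R,
\]
because $\prod_{i\in R}x_i$ equals $1$ exactly when $R\cap S=\emptyset$. Swapping the sums, we obtain
\[
\lambda_T(G_m)=\sum_{R\subseteq[q]}c_R\sum_{S\subseteq[q]\setminus R}(-1)^{|S\cap T|}(m-1)^{|S\setminus T|}.
\]

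\textbf{Step 2: evaluate the inner sum.} The summand factorizes over coordinates $i\in[q]\setminus R$. Each such $i$ is either in $S$ or not, so the inner sum equals
\[
\prod_{i\in[q]\setminus R}\bigl(1+w_i\bigr),
\]
where $w_i=-1$ if $i\in T$ and $w_i=m-1$ if $i\notin T$. The factor for $i\in T\setminus R$ is therefore $1+(-1)=0$. Consequently, the product vanishes unless $T\subseteq R$. When $T\subseteq R$, every $i\in[q]\setminus R$ lies outside $T$ and contributes $1+(m-1)=m$. The inner sum is then $m^{q-|R|}$.

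Substituting back gives
\[
\lambda_T(G_m)=\sum_{T\subseteq R\subseteq[q]}c_R\,m^{q-|R|},
\]
as claimed. The only point requiring care is Step 1, namely the identification of the indicator function of $\mathcal{F}_P$ with $P(1_{[q]\setminus S})$. This is exactly where the Boolean hypothesis is used: it makes each $A_S$ appear with coefficient $0$ or $1$, matching the definition of $\mathcal{F}_P$. After that, the computation is a routine inclusion–exclusion factorization, and I do not expect a genuine obstacle.
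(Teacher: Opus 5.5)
Your proof is correct, but it takes a different route from the paper. The paper first rewrites $P$ in the indicator basis of the cube, $P=\sum_{S\in\mathcal{F}_P}\prod_{i\in S}(1-x_i)\prod_{i\notin S}x_i$. This is where Booleanity enters, together with uniqueness of multilinear representations. The paper then applies $\partial_T$ to both this form and the monomial form and evaluates at $(1/m,\dots,1/m)$. The first side gives $m^{-(q-|T|)}\lambda_T(G_m)$, while on the second side $\partial_T$ automatically discards every monomial not containing $T$. You instead evaluate the monomial form at the points $1_{[q]\setminus S}$ and exchange the sums. The vanishing for $T\not\subseteq R$ then comes from the factor $1+(-1)=0$ in $\sum_{S\subseteq X}\prod_{i\in S}w_i=\prod_{i\in X}(1+w_i)$. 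Your route is more elementary: it needs no derivatives and no interpolation identity to justify. It also proves the more general identity $\sum_{S\in\mathcal{F}_P}\prod_{i\in S}\mu_i=\sum_{R\subseteq[q]}c_R\prod_{i\notin R}(1+\mu_i)$ for arbitrary weights $\mu_i$, so it would apply unchanged to NEPS of factor graphs other than $K_m$. The paper's route buys the compact interpretation $\lambda_T(G_m)=m^{q-|T|}\,\partial_T P(1/m,\dots,1/m)$. This presents the eigenvalues as a generating-function evaluation of $P$, with the $-1$ eigenvalue directions of $K_m$ becoming derivative directions.
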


\begin{proof} From the definition of $\mathcal{F}_P$ it is clear that 
\[ P(x_1, \ldots, x_q) = \sum_{S\in \mathcal{F}_P} \prod_{i\in S}(1-x_i)\prod_{i\notin S}x_i.\]

For $T\subseteq [q]$, let $\partial_T\defeq \prod_{i\in T}\partial_{x_i}$, where $\partial_{x_i}$ denotes the partial derivative w.r.t. $x_i$. Note that,
\begin{align*}
\partial_T P(x_1,\dots, x_q) 
&=\partial_T \left(\sum_{S\in \mathcal{F}_P} \prod_{i\in S}(1-x_i)\prod_{i\notin S}x_i\right)\\
&=\sum_{S\in \mathcal{F}_P}\partial_T \left( \prod_{i\in S}(1-x_i)\prod_{i\notin S}x_i\right)\\
&=\sum_{S\in \mathcal{F}_P}(-1)^{|T\cap S|} \prod_{i\in S\setminus T}(1-x_i)\prod_{i\notin S\cup T}x_i,
\end{align*}
and thus
\begin{align}\label{eq:partial_1}
\partial_T P\left(\frac{1}{m},\dots, \frac{1}{m}\right) 
&=\sum_{S\in \mathcal{F}_P}(-1)^{|T\cap S|} \prod_{i\in S\setminus T}\left(1-\frac{1}{m}\right)\prod_{i\notin S\cup T}\frac{1}{m}\nonumber\\
&=\sum_{S\in \mathcal{F}_P}(-1)^{|T\cap S|}\, \frac{(m-1)^{|S\setminus T|}}{m^{|S\setminus T|}}\, \frac{1}{m^{|[q]\setminus (S\cup T)|}}\nonumber\\
&=\sum_{S\in \mathcal{F}_P}(-1)^{|T\cap S|} \frac{(m-1)^{|S\setminus T|}}{m^{|[q]\setminus T|}}\nonumber\\
&=\frac{1}{m^{q-|T|}}\, \lambda_T(G_m).
\end{align}

On the other hand,
\begin{align*}
\partial_T P(x_1,\cdots, x_q)
&=\partial_T\left(\sum_{R\subseteq [q]} c_R \prod_{i\in R} x_i\right)\\ &=\sum_{R\subseteq [q]} c_R \partial_T\left(\prod_{i\in R} x_i\right)\\ &=\sum_{T\subseteq R\subseteq [q]} c_R\, \prod_{i\in R\setminus T} x_i,
\end{align*}
from which it follows that
\begin{align}\label{eq:partial_2}
\partial_T P\left(\frac{1}{m},\dots, \frac{1}{m}\right) &=\sum_{T\subseteq R\subseteq [q]} c_R\, \prod_{i\in R\setminus T} \frac{1}{m}\nonumber\\
&=\sum_{T\subseteq R\subseteq [q]} c_R\, \frac{1}{m^{|R\setminus T|}}\nonumber\\
&=\frac{1}{m^{q-|T|}}\sum_{T\subseteq R\subseteq [q]} c_R m^{q-|R|}.
\end{align}

The assertion follows from \eqref{eq:partial_1} and \eqref{eq:partial_2}.
\end{proof}

Lemma~\ref{lemma:pol_eigen} implies that for $T\subseteq [q]$ and for $m\in \mathbb{N}$ sufficiently large, the sign of $\lambda_T(G_m)$ is controlled by the coefficients $c_R$ for $T\subseteq R\subseteq [q]$. The next lemmas show that the properties used to define a $(q,s)$-admissible polynomial essentially govern the inertia of $G_m$. 

\begin{lemma} As $m\to \infty$, 
    \[n^+(G_m) = \Theta(m^q).\]
\end{lemma}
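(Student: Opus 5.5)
The plan is to exhibit a single eigenspace of $A(G_m)$ that carries almost all of the dimension and has a positive eigenvalue. The natural candidate is $\mathcal{U}_{[q]}$, the space indexed by $T=[q]$. By Lemma~\ref{lemma:eigenvalues_G_m}, $\mathcal{U}_{[q]}$ is an eigenspace of $A(G_m)$ of dimension $(m-1)^q$. By Lemma~\ref{lemma:pol_eigen}, the only $R$ with $[q]\subseteq R\subseteq[q]$ is $R=[q]$, so the corresponding eigenvalue is
\[
\lambda_{[q]}(G_m)=c_{[q]}\,m^{0}=c_{[q]}.
\]
This constant does not depend on $m$.

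Next, I use the admissibility hypothesis $c_{[q]}>0$. It gives $\lambda_{[q]}(G_m)>0$ for every $m$. Since eigenspaces indexed by distinct $T$ are mutually orthogonal and together span $\mathbb{R}^{m^q}$, all $(m-1)^q$ eigenvalues coming from $\mathcal{U}_{[q]}$ are positive eigenvalues of $A(G_m)$. Therefore
\[
n^+(G_m)\ge (m-1)^q .
\]
For the upper bound, $n^+(G_m)\le |V(G_m)|=m^q$ holds trivially.

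Finally, $(m-1)^q=m^q(1-1/m)^q\ge m^q/2^q$ for $m\ge 2$. Hence $n^+(G_m)=\Theta(m^q)$.

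There is no real obstacle here. The only point needing care is to apply Lemma~\ref{lemma:pol_eigen} at $T=[q]$ correctly, so that the sum collapses to the single coefficient $c_{[q]}$. After that, the positivity condition in the definition of admissibility is exactly what is needed.
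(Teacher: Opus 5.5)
Your proposal is correct and is essentially the paper's own proof. Both use the eigenspace $\mathcal{U}_{[q]}$ of dimension $(m-1)^q$ with eigenvalue $\lambda_{[q]}(G_m)=c_{[q]}>0$ to get the lower bound, and the trivial bound $n^+(G_m)\le m^q$ for the upper bound.
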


\begin{proof}
Since $G_m$ has order $m^q$, we have $n^+(G_m)\leq m^q$. By Lemmas~\ref{lemma:eigenvalues_G_m} and~\ref{lemma:pol_eigen}, we have $\lambda_{[q]}(G_m) = c_{[q]}>0$ with multiplicity $\dim \mathcal{U}_{[q]}=(m-1)^q$. Therefore, $n^+(G_m)\geq (m-1)^q$. Assertion follows. 
\end{proof}

\begin{lemma} As $m\to \infty$, 
    \[n^-(G_m)=\Theta(m^s).\]
\end{lemma}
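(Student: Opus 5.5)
We prove matching upper and lower bounds of order $m^s$ by using the eigenspace decomposition $\mathbb{R}^{m^q}=\bigoplus_{T\subseteq[q]}\mathcal{U}_T$. By Lemma~\ref{lemma:eigenvalues_G_m}, $n^-(G_m)$ is the sum of $(m-1)^{|T|}$ over those $T$ with $\lambda_T(G_m)<0$. By Lemma~\ref{lemma:pol_eigen}, $\lambda_T(G_m)=\sum_{T\subseteq R\subseteq[q]} c_R\, m^{q-|R|}$ is a polynomial in $m$ with integer coefficients. So, for $m$ sufficiently large, its sign equals the sign of the coefficient of its highest power of $m$. That coefficient is $c_R$ for the smallest (by cardinality) $R\supseteq T$ with $c_R\neq 0$, summed over all such $R$ of that minimal size. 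Since there are only $2^q$ sets $T$, all signs stabilize simultaneously for large $m$.

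\emph{Upper bound.} Suppose $\lambda_T(G_m)<0$ for large $m$. Then some $R\supseteq T$ satisfies $c_R<0$, since otherwise every term in the sum is nonnegative. By $(q,s)$-admissibility, $|R|\le s$, and hence $|T|\le s$. It follows that $n^-(G_m)\le \sum_{|T|\le s}(m-1)^{|T|}\le 2^q m^s=O(m^s)$.

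\emph{Lower bound.} Choose $T_0$ with $|T_0|=s$ and $c_{T_0}<0$; it exists by the definition of $s$. Then
\[\lambda_{T_0}(G_m)=c_{T_0}m^{q-s}+\sum_{T_0\subsetneq R} c_R m^{q-|R|},\]
and every remaining term has $m$-degree at most $q-s-1$. So $\lambda_{T_0}(G_m)<0$ for $m$ large, which gives $n^-(G_m)\ge (m-1)^s=\Omega(m^s)$.

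The lower bound is the delicate step, because the leading term could in principle be cancelled. This cannot happen here: $T_0$ is itself the unique set $R$ of minimal size $|R|=|T_0|$ with $R\supseteq T_0$. So the top coefficient is exactly $c_{T_0}$, with no competing terms of the same degree. The only other point to check is that the implied constants are uniform in $m$, which holds because there are finitely many $T$.
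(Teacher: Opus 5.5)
Your proof is correct and follows the paper's argument. The lower bound uses the eigenspace $\mathcal{U}_T$ with $|T|=s$ and $c_T<0$, whose eigenvalue is dominated by $c_T m^{q-s}$; the upper bound uses the fact that $|T|>s$ forces $c_R\ge 0$ for all $R\supseteq T$, hence $\lambda_T(G_m)\ge 0$. Your side remark identifying the leading coefficient as a sum of the $c_R$ at minimal level is slightly off, since that sum could cancel, but nothing in your argument depends on it.
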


\begin{proof}
Consider $T\subseteq [q]$ with $|T|=s$ and $c_T<0$. Then, by Lemmas~\ref{lemma:eigenvalues_G_m} and~\ref{lemma:pol_eigen}, 
\[
\lambda_T(G_m) = \sum_{T\subseteq R \subseteq [q]} c_R\, m^{q-|R|} \sim c_T m^{q-|T|}<0,
\]
for $m\in \mathbb{N}$ sufficiently large. It follows that
\begin{align*}
    n^-(G_m) \geq \dim \mathcal{U}_T = (m-1)^{|T|}=(m-1)^s.
\end{align*}
Now, note that if $T\subseteq [q]$ with $|T|>s$, then $c_R\geq 0$ for every $T\subseteq R\subseteq [q]$. Then by Lemma~\ref{lemma:pol_eigen},
\[
\lambda_T(G_m) = \sum_{T\subseteq R \subseteq [q]} c_R\, m^{q-|R|}\geq 0.
\]
As a consequence, if $\lambda_T(G_m)<0$, then $|T|\leq s$. It follows that 
\begin{align*}
n^-(G_m) &\leq \sum_{T\subseteq [q],\, |T|\leq s} \dim \mathcal{U}_T \\ &=\sum_{T\subseteq [q],\, |T|\leq s} (m-1)^{|T|} \\ &=\sum_{k=0}^s {q \choose k} (m-1)^k\\ &\leq m^s\sum_{k=0}^s {q \choose k} \\ &=O_{q,s}(m^s). 
\end{align*}
We conclude that $n^-(G_m)=\Theta(m^s)$.
\end{proof}

\begin{lemma} For all sufficiently large $m$, 
    \[n^0(G_m)=0.\]
\end{lemma}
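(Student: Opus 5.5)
Since the spaces $\mathcal{U}_T$ for $T\subseteq[q]$ decompose $\mathbb{R}^{m^q}$ into an orthogonal direct sum of eigenspaces (Lemma~\ref{lemma:eigenvalues_G_m}), every eigenvalue of $A(G_m)$ is one of the $2^q$ numbers $\lambda_T(G_m)$. It therefore suffices to show that, for $m$ large, $\lambda_T(G_m)\neq 0$ for every $T\subseteq[q]$. Only finitely many $T$ are involved, so it is enough to prove this for each fixed $T$ when $m$ is sufficiently large, and then take the maximum of the thresholds.

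Fix $T$. By Lemma~\ref{lemma:pol_eigen}, $\lambda_T(G_m)=\sum_{T\subseteq R\subseteq[q]}c_R\,m^{q-|R|}$, which is a polynomial in $m$. The key observation is that this polynomial is not identically zero, because its constant term (the term with $R=[q]$) is $c_{[q]}$, and admissibility gives $c_{[q]}>0$. A nonzero polynomial in $m$ has only finitely many roots. Hence $\lambda_T(G_m)\neq 0$ for all $m$ beyond its largest real root.

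More concretely, one can take the smallest $R\supseteq T$ with $c_R\neq 0$. Such an $R$ exists since $c_{[q]}\neq0$. Among those of minimal size $|R|$, the dominant term is $m^{q-|R|}\sum_{|R|\text{ minimal}}c_R$. If that sum happens to vanish, one passes to the next power. Either way, the leading nonzero coefficient exists and determines the sign of $\lambda_T(G_m)$ for large $m$.

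There is no real obstacle here. The only point needing care is the remark that the polynomial cannot vanish identically, and this is exactly where the hypothesis $c_{[q]}>0$ is used. Combining over all $T$ gives $n^0(G_m)=0$ for all sufficiently large $m$.
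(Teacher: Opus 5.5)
Your proof is correct and follows the paper's argument. Each $\lambda_T(G_m)$ is, by Lemma~\ref{lemma:pol_eigen}, a polynomial in $m$ with constant term $c_{[q]}>0$, so it is not identically zero. Since there are only $2^q$ such polynomials and they account for the whole spectrum, all eigenvalues are nonzero for $m$ large; your extra paragraph on the leading coefficient is correct but not needed.
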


\begin{proof} Consider $T\subseteq [q]$. By Lemmas~\ref{lemma:eigenvalues_G_m} and~\ref{lemma:pol_eigen} and since $c_{[q]}>0$, $\lambda_T(G_m)$ is a polynomial in $m$ with positive constant coefficient. Since $q$ is fixed, the degree of the polynomial $\lambda_T(G_m)$ is bounded, and so for all large enough $m$, $\lambda_T(G_m)\neq 0$. Assertion holds. 
\end{proof}

Theorem \ref{thm:admissible_polynomials} now follows from the above lemmas.

\section{Concluding remarks}
\label{section:conclusion}

Given that there are graphs with $n^- = k$ and $n^+ = \Omega(k^{\frac{7}{3}})$ (Theorem \ref{thm:inertia_counter}), it is natural to wonder what the correct upper bound for $n^+$ is in terms of $n^-$. We also found $(32,12)$-admissible polynomials showing the existence of a graph family with $n^+ = \Omega(k^{\frac{8}{3}})$; we chose not to include them to keep our presentation short and easily verifiable. In \cite{Akbari_Elphick_Kumar_Pragada_Tang_2026}, the authors observe an upper bound for $n^+(G)$ which is exponential in $n^-(G)$ for a given graph $G$. But the following problem still remains challenging. 

\begin{problem}[\cite{Akbari_Elphick_Kumar_Pragada_Tang_2026}]\label{problem:polynomial_bound} Does there exist a polynomial $f$ such that for every graph $G$, 
\[n^+(G)\le f(n^-(G)).\]
\end{problem}

\section*{Acknowledgements}

The authors thank Bojan Mohar for helpful comments. 

\section*{Declaration of AI use}

The authors acknowledge the use of ChatGPT (GPT-5.5, OpenAI; accessed July 2026) solely for preliminary brainstorming and the exploration of possible proof strategies. AI tools were not used to draft the manuscript. All formal statements, arguments, and proofs in the manuscript were written and checked by the authors, who take full responsibility for the accuracy and integrity of the article. 

\bibliographystyle{plain}
\bibliography{references_1.bib}

\vspace{0.4cm}
\affl{Clive Elphick}{clive.elphick@gmail.com}{School of Mathematics, University of Birmingham, Birmingham, UK}

\affl{Hitesh Kumar}{hitesh.kumar.math@gmail.com, hitesh\_kumar@sfu.ca}{Department of Mathematics, Simon Fraser University, Burnaby, Canada}

\affl{Shivaramakrishna Pragada}{shivaramakrishna\_pragada@sfu.ca, shivaramkratos@gmail.com}{Department of Mathematics, Simon Fraser University, Burnaby, Canada}

\affl{Thom\'{a}s Jung Spier}{thomasjung@dcc.ufmg.br}{Department of Computer Science, Universidade Federal de Minas Gerais, Belo Horizonte, Brazil}

\end{document}